\newtheorem{theorem}{Theorem}
\newtheorem{lemma}[theorem]{Lemma}
\newtheorem{lemma-def}[theorem]{Lemma-Definition}
\newtheorem{proposition}[theorem]{Proposition}
\newtheorem{corol}[theorem]{Corollary}
\newtheorem{definition}[theorem]{Definition}
\newcommand{\Q}{\mathbb{Q}}
\newcommand{\Z}{\mathbb{Z}}
\newcommand{\N}{\mathbb{N}}
\newcommand{\R}{\mathbb{R}}
\newcommand{\ord}{\mathrm{ord}\,}
\newcommand{\vu}{\nu}
\DeclareMathOperator{\sq}{\square}
\title[Parametric rectilinearization and rationality of $p$-adic integrals]
{Parametric rectilinearization for $p$-adic semi-algebraic sets \\and rationality of $p$-adic integrals}\author{Eva Leenknegt}
\address{Department of Mathematics, \\Purdue University, 
\\West Lafayette, Indiana 47907, USA}
\email{eleenkne@math.purdue.edu}
\date{}
\begin{document}
\maketitle

\begin{abstract}
We present a rectilinearization theorem for $p$-adic semi-algebraic sets depending on parameters.
As an application of our main theorem we present an alternative proof of a rationality result for parametric $p$-adic integrals, due to Denef.
\end{abstract}
\section{Introduction}

The technique of rectilinearization was originally used by Cluckers \cite{clu-2000}, to show that any two infinite $p$-adic semi-algebraic sets are isomorphic if they have the same dimension.
To achieve this result, an important step was to show that any semi-algebraic $p$-adic set was isomorphic to a finite number of sets of a specific, simple form. It was already noted by Cluckers at that time that the result might be useful to $p$-adic integration, especially since the used isomorphisms also appeared to be very basic functions.

In a previous paper \cite{clu-lee-2008}, we presented such an extended rectilinearization result, which states that 
any
semialgebraic $p$-adic set can be partitioned into finitely many
parts each of which is semi-algebraically isomorphic to a
Cartesian power of basic subsets $\Z_p^{(k)}$ of $\Q_p$, where
$\Z_p^{(k)}$ is the set of $p$-adic integers (of any nonnegative
order) having coefficients $1,0,\ldots,0$ in their $p$-adic expansion,
with $k-1$ zeros, see (\ref{(k)}) below. Moreover, we could also ensure that  the order of a finite number of given semi-algebraic fuctions, and the order of the Jacobian of the occuring isomorphisms, would equal the order of a monomial with integer powers. We used this rectilinearization result to give a new, very simple proof of the rationality of certain $p$-adic integrals. Rationality of such integrals and the related Poincare series had originally been proved by Denef \cite{denef-84} in two different ways, one of which was based on cell decomposition techniques.

This paper,  which was inspired by a similar result of  Cluckers for Presburger sets \cite{clu-presb03}, presents a parametric version of the rectilinearization theorem mentioned above. Our motivation here is similar to that in the previous paper, namely that this result allows us to give alternative, simple proofs of the rationality of parametric $p$-adic integrals. As an application of our main theorem we present an alternative proof of a rationality result for parametric $p$-adic integrals, due to Denef  \cite{denef-85, denef-2000}.

\subsection{Notation and terminology}
Let $p$ denote a fixed prime number, $\Q_p$ the field of $p$-adic
numbers and $K$ a fixed finite field extension of $\Q_p$.  For $x\in
K$ let $\ord(x)\in\Z\cup\{+\infty\}$ denote the valuation of $x$. Let
$R=\{x\in K\mid \ord(x)\geq0\}$ be the valuation ring, and let $q_K$
denote the cardinality of the residue field of $K$. Put
$K^\times=K\setminus\{0\}$ and for $n\in\N_0$ let $P_n$ be the set
$\{x\in K^\times\mid\exists y\in K\ : y^n=x\}$. We call a subset of
$K^n$ {\em semi-algebraic } if it is a Boolean combination (i.e.
obtained by taking finite unions, complements and finite
intersections) of sets of the form $\{x\in K^m\mid f(x)\in P_n\}$,
with $f(x)\in K[X_1,\ldots,X_m]$. The quantifier elimination result by Macintyre \cite{mac-76} implies that the collection of semi-algebraic
sets is closed under taking projections $K^m\to K^{m-1}$. Moreover, also sets of the form $\{x\in K^m \ \mid \ \ord(f(x))\leq
\ord(g(x))\}$ with $f(x),g(x)\in K[X_1,\ldots,X_m]$ are semi-algebraic
(see Denef's alternative proof \cite{denef-86} of Macintyre's theorem). 
A function $f:A\to B$ is called semi-algebraic
if its graph is a semi-algebraic set; if such a function $f$ is a bijection,
we call $f$
 an {\em isomorphism}. 
 By a \emph{finite partition} of a
semi-algebraic set we mean a partition into finitely many
semi-algebraic sets.
\par
 Let $\pi$ be a fixed element of $R$ with
$v(\pi)=1$, thus $\pi$ is a uniformizing parameter for $R$. 
For a
semi-algebraic set $X\subset K$ and $k>0$ we write
\begin{equation}\label{(k)}
X^{(k)}=\{x\in X\ \mid\ x\not=0 \mbox{ and } \ord(\pi^{-\ord(x)}x-1)\geq
k\},
\end{equation}
which is semi-algebraic (see \cite{denef-86}, Lemma 2.1); $X^{(k)}$
consists of those points $x\in X$ which have a $p$-adic expansion
$x=\sum_{i=s}^\infty a_i\pi^i$ with $a_s=1$ and $a_i=0$ for
$i=s+1,\ldots,s+k-1$.  \\

\noindent We recall a form of Hensel's Lemma and a corollary.
\begin{lemma}[Hensel]\label{Hensel}
Let $f(t)$ be a polynomial over $R$ in one variable $t$, and let
$\alpha\in R,\ e\in\N$. Suppose that $\ord( f(\alpha))>2e$ and $
  \ord(f'(\alpha))\leqslant e$, where $f'$ denotes the derivative of $f$. Then there
exists a unique $\bar\alpha\in R$ such that $f(\bar\alpha)=0$ and
$\ord(\bar\alpha-\alpha)>e$.
\end{lemma}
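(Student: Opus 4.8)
The plan is to prove this by Newton's iteration, exploiting the completeness of $R$ and the ultrametric inequality. The essential algebraic input is the Taylor expansion of a polynomial over $R$: for any $a,b\in R$ one can write $f(a+b)=f(a)+f'(a)b+b^2 g(a,b)$ with $g\in R[X,Y]$ (the coefficients lie in $R$ because the binomial coefficients that occur are integers, hence in $R$), and similarly $f'(a+b)=f'(a)+b\,h(a,b)$ with $h\in R[X,Y]$. These two identities, together with the non-archimedean inequality $\ord(x+y)\geq\min(\ord x,\ord y)$ and its equality case when the orders differ, will drive the whole argument.

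First I would set $\alpha_0=\alpha$ and define recursively $\alpha_{n+1}=\alpha_n-f(\alpha_n)/f'(\alpha_n)$, writing $\delta_n=\alpha_{n+1}-\alpha_n=-f(\alpha_n)/f'(\alpha_n)$. The key invariants to maintain by induction are (i) $\ord(f'(\alpha_n))=\ord(f'(\alpha))\leq e$ and (ii) $\ord(f(\alpha_n))-2e\geq 2^n(\ord(f(\alpha))-2e)>0$. For the base case, (ii) is exactly the hypothesis. For the inductive step, (i) and (ii) give $\ord(\delta_n)=\ord(f(\alpha_n))-\ord(f'(\alpha_n))>e$; substituting $b=\delta_n$ into the Taylor expansion of $f'$ yields $\ord(f'(\alpha_{n+1})-f'(\alpha_n))\geq\ord(\delta_n)>e\geq\ord(f'(\alpha_n))$, so the ultrametric equality case forces $\ord(f'(\alpha_{n+1}))=\ord(f'(\alpha_n))$, preserving (i). Substituting $b=\delta_n$ into the Taylor expansion of $f$ and using $f'(\alpha_n)\delta_n=-f(\alpha_n)$ kills the linear term, leaving $f(\alpha_{n+1})=\delta_n^2 g$, whence $\ord(f(\alpha_{n+1}))\geq 2\ord(\delta_n)\geq 2(\ord(f(\alpha_n))-e)$; subtracting $2e$ shows the gap at least doubles, which is precisely (ii).

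Next I would pass to the limit. Invariant (ii) forces $\ord(f(\alpha_n))\to\infty$, and since $\ord(\delta_n)=\ord(f(\alpha_n))-\ord(f'(\alpha_n))\geq\ord(f(\alpha_n))-e$, the orders $\ord(\delta_n)$ are strictly increasing and tend to infinity; hence $(\alpha_n)$ is Cauchy in the complete ring $R$ and converges to some $\bar\alpha\in R$. Continuity of the polynomial $f$ gives $f(\bar\alpha)=\lim_n f(\alpha_n)=0$. For the distance estimate I would write $\bar\alpha-\alpha=\sum_{n\geq 0}\delta_n$; because the orders $\ord(\delta_n)$ strictly increase with $\ord(\delta_0)>e$ the unique smallest, the ultrametric inequality gives $\ord(\bar\alpha-\alpha)=\ord(\delta_0)>e$, as required.

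Finally, for uniqueness, suppose $\beta\in R$ satisfies $f(\beta)=0$ and $\ord(\beta-\alpha)>e$. Then $\ord(\beta-\bar\alpha)>e$ as well, and the same ultrametric argument shows $\ord(f'(\bar\alpha))=\ord(f'(\alpha))\leq e$. Setting $\gamma=\beta-\bar\alpha$ and expanding $0=f(\beta)=f(\bar\alpha+\gamma)=f'(\bar\alpha)\gamma+\gamma^2 g$, I would factor out $\gamma$; if $\gamma\neq 0$ this forces $\ord(f'(\bar\alpha))=\ord(\gamma)+\ord(g)\geq\ord(\gamma)>e$, contradicting $\ord(f'(\bar\alpha))\leq e$, so $\beta=\bar\alpha$. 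I expect the main technical point to be verifying that the Taylor coefficients genuinely lie in $R$ and correctly invoking the ultrametric equality case to keep $\ord(f'(\alpha_n))$ constant; the convergence and the uniqueness are then formal consequences of completeness.
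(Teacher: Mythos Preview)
Your argument is the standard Newton-iteration proof of Hensel's Lemma and is correct as written; the invariants (i) and (ii) are stated precisely, the passage to the limit uses completeness of $R$ correctly, and the uniqueness argument is clean. One tiny slip: in the uniqueness paragraph you write $\ord(f'(\bar\alpha))=\ord(\gamma)+\ord(g)$, but you only know $\ord(f'(\bar\alpha))\geq\ord(\gamma)+\ord(g)$; this does not affect the conclusion since you only need the inequality $\ord(f'(\bar\alpha))\geq\ord(\gamma)>e$.

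As for comparison with the paper: there is nothing to compare. The paper does not prove this lemma; it merely \emph{recalls} the statement (``We recall a form of Hensel's Lemma and a corollary'') and uses it as input. So your proposal supplies a proof where the paper provides none, and the classical Newton-iteration approach you take is exactly the expected one.
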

\begin{corol}
\label{corhensel}
Let $n>1$ be a natural number. For each $k> \ord(n)$, and $k'=k+ \ord(n)$
the function $ K^{(k)}  \to P_n^{(k')}:x\mapsto x^n $ is an
isomorphism.
\end{corol}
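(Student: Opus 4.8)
The plan is to verify directly that the map $\phi\colon x\mapsto x^n$ is well-defined into the stated codomain, that it is a bijection, and that it is semi-algebraic; the last point is immediate, since the graph $\{(x,x^n)\mid x\in K^{(k)}\}$ is cut out by the polynomial relation $y=x^n$ together with the semi-algebraic membership condition defining $K^{(k)}$. The workhorse throughout is the binomial expansion combined with the hypothesis $k>\ord(n)$, which guarantees that the linear term $n\eta$ dominates the expansion of $(1+\eta)^n-1$ whenever $\ord(\eta)\geq k$.

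First I would check well-definedness. Writing $x=\pi^s u$ with $s=\ord(x)$ and $u=\pi^{-s}x=1+\eta$, the condition $x\in K^{(k)}$ says exactly $\ord(\eta)\geq k$. Since $\ord(x^n)=ns$ and $\pi^{-ns}x^n=u^n$, it suffices to show $\ord(u^n-1)\geq k'$. Expanding $u^n-1=n\eta+\sum_{j\geq 2}\binom{n}{j}\eta^j$, the linear term has valuation $\geq\ord(n)+k=k'$, while for $j\geq 2$ each term has valuation $\geq jk\geq 2k>k+\ord(n)=k'$, using $k>\ord(n)$. Hence $\ord(u^n-1)\geq k'$ and $x^n\in P_n^{(k')}$.

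For surjectivity, take $y\in P_n^{(k')}$. Since $y$ is an $n$-th power, $n\mid\ord(y)$; set $s=\ord(y)/n$ and write $y=\pi^{ns}w$ with $w=\pi^{-ns}y$ a unit satisfying $\ord(w-1)\geq k'$. It then suffices to produce a unit $u$ with $u^n=w$ and $\ord(u-1)\geq k$, for then $x=\pi^s u\in K^{(k)}$ maps to $y$. Here I would invoke Hensel's Lemma (Lemma \ref{Hensel}) applied to $f(t)=t^n-w$ at $\alpha=1$ with $e=\ord(n)$: one has $\ord(f(1))=\ord(w-1)\geq k'=k+\ord(n)>2\ord(n)=2e$ and $\ord(f'(1))=\ord(n)=e\leq e$, so there is a unique root $u\in R$ with $\ord(u-1)>\ord(n)$.

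The subtle point — and the only real obstacle — is that Hensel delivers only $\ord(u-1)>\ord(n)$, whereas I need the sharper bound $\ord(u-1)\geq k$. I would close this gap by a bootstrapping computation: setting $d=\ord(u-1)>\ord(n)$ and expanding as before, $w-1=n(u-1)+\sum_{j\geq 2}\binom{n}{j}(u-1)^j$, where the linear term has valuation exactly $\ord(n)+d$ while every higher term has valuation $\geq 2d>\ord(n)+d$. Thus $\ord(w-1)=\ord(n)+d$, and together with $\ord(w-1)\geq k+\ord(n)$ this forces $d\geq k$, as required. Finally, injectivity follows from the uniqueness clause of Hensel's Lemma: if $x_1,x_2\in K^{(k)}$ satisfy $x_1^n=x_2^n$, comparing valuations gives $\ord(x_1)=\ord(x_2)$, so their unit parts $u_1,u_2$ are both roots of $t^n-w$ (with $w$ the common unit part of $x_1^n$) satisfying $\ord(u_i-1)\geq k>\ord(n)$; since the Hensel hypotheses hold for this $w$, uniqueness yields $u_1=u_2$ and hence $x_1=x_2$.
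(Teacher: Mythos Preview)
Your proof is correct. The paper does not supply a proof of this corollary at all; it simply records it as an immediate consequence of Hensel's Lemma (Lemma~\ref{Hensel}), and your argument via Hensel applied to $f(t)=t^n-w$ at $\alpha=1$ with $e=\ord(n)$, together with the binomial bootstrap to upgrade $\ord(u-1)>\ord(n)$ to $\ord(u-1)\geq k$, is exactly the intended derivation.
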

\par
\noindent We will also need the following cell decomposition theorem, a proof of which can for example be found in \cite{clu-2000}. Prior to that, this result was used in the proofs of \cite{denef-84}.
\begin{lemma}\label{partition}
Let $X\subset K^m$ be semi-algebraic and $b_j:K^m\to K$
semi-algebraic functions for $j=1,\ldots,r$. Then there exists a
finite partition of $X$ s.t. each part $A$ has the form
\begin{eqnarray*}
A & = & \{x\in K^m \mid \hat x\in D, \  \ord a_1(\hat
    x)\sq_1 \ord(x_m-c(\hat x))\sq_2 \ord a_2(\hat x),\\
 & & \qquad\qquad\qquad x_m-c(\hat x)\in \lambda P_n\},\end{eqnarray*}
and such that for each $x\in A$ we have that
\[ \ord(b_j(x))=\frac{1}{n} \ord((x_m-c(\hat x))^{\mu_j}d_j(\hat x)),\]
where $\hat x=(x_1,\ldots, x_{m-1})$, $D\subset K^{m-1}$
is semi-algebraic, the set $A$ projects surjectively onto $D$, $n>0$,
$\mu_j\in\Z$, $\lambda\in K$, $c,d_j:K^{m-1} \to K$ and
$a_i:K^{m-1}\to K^\times$ are semi-algebraic functions, and each $\sq_i$ is
either
$\leq$ or no condition. If $\lambda = 0$, we use the conventions that  $\mu_j =0$ and $0^0 =1$, and thus $\ord(b_j(x)) = \frac1{n}\,\ord(d_j(\hat{x}))$.\\
Moreover, $D$ has the structure of a $K$-analytic manifold and the
functions $c$, $a_i$ and $d_j$ are $K$-analytic on $D$.
\end{lemma}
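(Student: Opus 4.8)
The plan is to reduce, via Macintyre's quantifier elimination, to a statement about finitely many polynomials in the single variable $x_m$, and then to ``prepare'' those polynomials so that their valuations become integer-affine functions of $\ord(x_m-c(\hat x))$ for a suitable common center $c$. First I would apply Macintyre's theorem to the set $X$ together with the graphs of the functions $b_1,\ldots,b_r$, writing all of them as finite Boolean combinations of atomic conditions of the two allowed shapes, namely $f\in P_n$ and $\ord f\leq\ord g$ with $f,g\in K[X_1,\ldots,X_m]$. Collecting the finitely many polynomials $f_1,\ldots,f_s$ that occur and regarding each as a polynomial in $x_m$ with coefficients in $K[\hat x]$, the whole problem is governed by the functions $x_m\mapsto\ord f_i(\hat x,x_m)$ and by the power-residue type of $f_i(\hat x,x_m)$, both viewed as varying with $x_m$ over the parameter space $\hat x\in K^{m-1}$.

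The core of the argument — and the step I expect to be the main obstacle — is a monomialization statement for these polynomials, proved by induction on the $x_m$-degree $d$. The goal is that, after a finite partition of the $\hat x$-space, one can choose a single semi-algebraic center $c(\hat x)$ tracking the relevant root so that, on each piece,
\[
\ord f_i(\hat x,x_m)\;=\;\ord\bigl(a_i(\hat x)\,(x_m-c(\hat x))^{\mu_i}\bigr)
\]
for semi-algebraic $a_i,c$ and integers $\mu_i$. The delicate point is that the roots of $f_i(\hat x,t)=0$ move with $\hat x$ and may collide or separate, so one must partition the base into regions on which the valuation-theoretic configuration of the roots (their pairwise distances and multiplicities) is constant, isolate the cluster of roots nearest the region swept out by $x_m$, and absorb the rest into the unit $a_i$. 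Locating these roots is where Hensel's Lemma~\ref{Hensel} enters: a root lies near $x_m$ precisely when $\ord f_i(x_m)$ is large relative to $\ord f_i'(x_m)$, so on the complementary region one replaces $f_i$ by $f_i'$ and applies the inductive hypothesis in lower degree, intersecting all the resulting partitions of the $\hat x$-space and forcing a common center and a common $n$ for all $f_i$ and all $b_j$ at once.

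Once the polynomials are monomialized, the remaining conditions fall out. Each order inequality $\ord f_i\leq\ord f_j$ becomes a comparison of $\ord(x_m-c)$ against $\ord a_1(\hat x)$ and $\ord a_2(\hat x)$, producing the sandwich $\ord a_1(\hat x)\sq_1\ord(x_m-c(\hat x))\sq_2\ord a_2(\hat x)$; and each power-residue condition $f_i\in\lambda P_n$ reduces — after raising the valuation of $x_m-c$ by a further partition — to a condition $x_m-c(\hat x)\in\lambda'P_{n'}$ on $x_m-c$ alone, using Corollary~\ref{corhensel} to see that for sufficiently large valuation $P_n$-membership is detected by the leading digits. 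The formula $\ord b_j(x)=\tfrac1n\ord((x_m-c(\hat x))^{\mu_j}d_j(\hat x))$ then simply records the monomialized valuation of $b_j$, with the factor $\tfrac1n$ arising from the $P_n$-parametrization of the cell, and the degenerate case $\lambda=0$, where the cell collapses to the graph $x_m=c(\hat x)$, is covered by the stated conventions $\mu_j=0$ and $0^0=1$.

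Finally I would upgrade $c,a_i,d_j$ to $K$-analytic functions and give $D$ a $K$-analytic manifold structure. Here the center $c$ is an implicitly defined root, hence analytic away from the vanishing of the relevant discriminant by the analytic implicit function theorem (equivalently, by the parametric form of Hensel's Lemma~\ref{Hensel}); after one more finite partition of $D$ removing the lower-dimensional bad loci, all occurring functions become analytic and $D$ becomes a manifold, which completes the proof.
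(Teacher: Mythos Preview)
The paper does not prove this lemma at all: it is stated as a known cell decomposition theorem and the reader is referred to \cite{clu-2000} and \cite{denef-84} for a proof. So there is no ``paper's own proof'' to compare against; your proposal is supplying an argument where the paper merely cites one.

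That said, your sketch is a faithful outline of the classical Denef proof that those references contain: Macintyre's quantifier elimination to reduce to finitely many polynomials in $x_m$, a preparation/monomialization of those polynomials proved by induction on the $x_m$-degree using Hensel's Lemma to separate the ``near a root'' region (where Hensel supplies the center $c(\hat x)$) from the ``far from roots'' region (where one passes to $f'$ and lowers the degree), followed by the reduction of order and $P_n$-conditions to the displayed cell shape, and finally the analyticity refinement via the implicit function theorem off discriminant loci. The main places where your sketch is thin are exactly the places that carry the real work in Denef's argument: arranging a \emph{single} common center $c(\hat x)$ for all the $f_i$ simultaneously (this requires iterating the preparation and refining partitions, not just treating each $f_i$ separately), and controlling the $P_n$-coset of $f_i$ (not merely $\ord f_i$) on each piece, which is what forces the common modulus $n$ and produces the $\tfrac{1}{n}$ in the formula for $\ord b_j$. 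If you were to write this out in full you would need to make those two points precise, but as a plan it is the correct one and matches the cited literature.
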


\section{A parametric version of rectilinearization}

\noindent Write $\pi_m: K^{m+r} \to K^r:(x_1, \ldots, x_{m+r}) \mapsto (x_1, \ldots, x_m)$ for the projection map onto the first $m$ coordinates. Given a set $A\subseteq K^{m+n}$ and $\xi \in \pi_m(A)$, we use the notation $A_{\xi}$ to denote the projection
\[A_{\xi}:= \{x\in K^n \ | \ (\xi,x)\in A \}.\]
We say that a semi-algebraic set $S \subseteq K^n$ is bounded if there exists a tuple $a \in (K^{\times})^n$, such that for each $x\in S$ and $i=1, \ldots, n$, we have that
\[-\ord a_i \leqslant \ord x_i \leqslant \ord a_i.\]
The following definitions are central.

\begin{definition}\textup{
Let $A \subseteq K^{r+m}$ be a semi-algebraic set.\\ We say that a family of functions $f =\{f_{\xi}\}_{\xi\in \pi_r(A)}$ with $f_{\xi}: A_{\xi}\to K^l$ is semi-algebraic if the graph of $f: A\to K^l: (\xi,x)\mapsto (\xi, f_{\xi}(x))$ is a semi-algebraic set.\\
We say that a family of semi-algebraic functions $f =\{f_{\xi}\}_{\xi\in \pi_r(A)}$ with $f_{\xi}: A_{\xi}\to K$ 
satisfies condition (\ref{paramconditionf}) if there exist constants $\mu_i\in\Z$ and a semi-algebraic function $\beta: \pi_r(A) \to K$ such
that each $(\xi,x_1,\ldots, x_m)\in A$ satisfies $x_i\not=0$ if $\mu_i<0$ and 
\begin{equation}\label{paramconditionf}
 \ord(f_{\xi}(x))=\ord(\beta(\xi)\prod_i x_i^{\mu_i}).
 \end{equation}
We say that a family of semi-algebraic functions
$g =\{g_{\xi}\}_{\xi\in \pi_r(A)}$ with $g_{\xi}: A_{\xi}\to K^m$
 satisfies condition (\ref{paramconditionjac})
 if  each $g_{\xi}$ is $C^1$ on $A_{\xi}$ and there
exist constants $\mu_i\in\Z$ and a semi-algebraic function $\beta: \pi_{r}(A)\to K$ such that each
$(\xi, x_1, \ldots, x_m)\in A$ satisfies $x_i\not=0$ if $\mu_i<0$ and 
\begin{equation}\label{paramconditionjac}
 \ord(\text{Jac}(g_{\xi}(x)))=\ord(\beta(\xi)\prod_i x_i^{\mu_i}).
 \end{equation}
 }
 \end{definition}

 \begin{definition}
Let $f:X\to Y$ be a family of semi-algebraic isomorphisms. Say that the
family $f$ is of type $f_0$ when $f$ equals an isomorphism of
the following kind:
\begin{eqnarray*}
f_0 &:&\hspace{-2pt} X\subset K^{r+n} \to Y \subset K^{r+n+1}: (\xi,x)
\mapsto (\xi, x_1, \ldots, x_{i-1},0,x_{i+1},\ldots, x_{n})\\
\end{eqnarray*}
for some $n\geq 0$.\\
Say that the family $f$ is of type $f_1$, resp. of type
$f_2$ or  $t_c$, when ${X \subset K^{r+n}}$,  ${Y\subset K^{r+n}}$ for
some $n\geq 0$, and $f$ equals an isomorphism of the following
kind:
\begin{eqnarray*}
f_1 \hspace{-5pt}&:&\hspace{-5pt} X \to Y: (\xi,x) \mapsto(\xi, \alpha_1 x_1^{a_1}, \ldots,
\alpha_n x_n^{a_n}),\\
f_2\hspace{-5pt}&:& \hspace{-5pt}X \to Y: (\xi,x) \mapsto (\xi, x_1, \ldots, x_{i-1}, x_i
\beta(\xi)\prod_{j\neq i}x_j^{b_j}, x_{i+1}, \ldots, x_n),\mbox{ or}\\
t_c\hspace{-5pt}&:&\hspace{-5pt} X\to Y : (\xi,x) \mapsto(\xi, x_1, \ldots, x_{i-1}, x_i + c(\xi, x_1,
\ldots, x_{i-1}), x_{i+1}, \ldots, x_n), \\
\end{eqnarray*}
with $a_i, b_j \in \Z$, $a_i<0$ implies $x_i\not = 0 $ for $x\in X$,
$b_j<0$ implies $x_j\not = 0 $ for $x\in X$, $\alpha_i \in K, \
{x = (x_1, \ldots, x_n)}$, $\beta: \pi_{r}(X)\to K$ a semi-algebraic function and $c:\pi_{r+i-1}(X) \to K$ a semialgebraic function which is
moreover $C^1$ in the case that $X$ is open in $K^{r+n}$.

Given a function $f: (\xi,x)\mapsto (\xi, f_{\xi}(x))$ of one of the types described above, we will also say that the functions $f_{\xi}$ have type $f_{i}$ or $t_{c}$.
\end{definition}

\begin{theorem}[Parametric Rectilinearization with basic functions]\label{thm:param-rect}
 Let $X \subset K^{r+ m}$ be a semi-algebraic set and $b_j:X\to K$ semi-algebraic functions for
$j=1,\ldots,n$. Then there exists a finite partition of $X$ such
that for each part $A= \cup_{\xi \in \pi_r(A)} A_{\xi}$ we have constants $l\in\N$, $k\in\N_0$,
 and a family of  isomorphisms $f = \{f_{\xi}\}_{\xi\in \pi_r(A)}$ with
\[f_{\xi}:\Delta_{\xi} \times \prod_{i=l'}^l R^{(k)}\to A_{\xi
}: x \mapsto (T_c \circ g)(x),\]
where 
 the values $l,l' \leqslant m$ only depend on the part $A$.
$\Delta_{\xi}$ is a set of the form $\{x \in \prod_{i=1}^{l'-1} R^{(k)} \mid (\ord x_1, \ldots, \ord x_{l'-1}) \in \Gamma_A\} $. For each $\xi \in \pi_r(A)$,  $\Gamma_A$ is a finite subset of $\Gamma_{K}^{l'-1}$ containing elements $\gamma = (\gamma_1, \ldots, \gamma_{l'-1})$ that satisfy relations
\[0 \leqslant \gamma_j \leqslant \ord (\beta_j(\xi)) + \sum_{i=1}^{j-1}n_i\gamma_i\]
for nonzero semi-algebraic functions $\beta_i: \pi_r(A)\to K$ and $n_i,m_i \in \Z$ for $i =1,\ldots, l'-1$. 
The function $g$ is a composition of bijective maps of the types
$f_0,f_1,f_2$, and $T_c$ a composition of functions of type
$t_c$.
\\\\
This implies that the functions $b_j \circ f$ satisfy condition
(\ref{paramconditionf}). Moreover, for each part $A$ where
 $l = m$, $f$ satisfies condition
(\ref{paramconditionjac}).
\end{theorem}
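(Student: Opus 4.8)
The plan is to argue by induction on the number $m$ of non-parameter variables, using the cell decomposition of Lemma \ref{partition} to peel off one variable at a time. The base case $m=0$ is immediate: there are no coordinates $x$ to rectilinearize, the factors $\prod_{i=l'}^l R^{(k)}$ and $\Delta_\xi$ are empty, and each $b_j$ depends only on $\xi$, so (\ref{paramconditionf}) holds with $\beta=b_j$ and an empty monomial. For the inductive step I would apply Lemma \ref{partition} to $X\subseteq K^{(r+m-1)+1}$, treating $(\xi,\hat x)=(\xi,x_1,\dots,x_{m-1})$ as the base coordinates and $x_m$ as the distinguished last coordinate. This partitions $X$ into cells $A$ carrying a center $c(\xi,\hat x)$, bounds $a_1,a_2$, a value $\lambda\in K$ and an integer $n$, with $\ord(b_j)=\tfrac1n\ord\big((x_m-c)^{\mu_j}d_j(\xi,\hat x)\big)$ on $A$.

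The heart of the argument is the treatment of the single coordinate $x_m$ on each cell. First I would record the translation $x_m\mapsto x_m-c(\xi,\hat x)$ as an outer factor of type $t_c$; since $c$ depends on the coordinates $\hat x$ that the induction will still transform, these translations must be carried out last, which is exactly why the claimed map has the shape $f=T_c\circ g$. After centering, the coordinate satisfies $x_m\in\lambda P_n$ together with $\ord a_1\,\sq_1\,\ord x_m\,\sq_2\,\ord a_2$. If $\lambda=0$ the cell is a single point in the $x_m$-direction, and I would drop the coordinate by an $f_0$ map (so $l<m$); the order of $b_j$ then reduces to $\tfrac1n\ord(d_j(\xi,\hat x))$, a function of the base alone. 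If $\lambda\neq0$, I would use the scaling--power maps of type $f_1$ together with Corollary \ref{corhensel} to straighten $\lambda P_n$: inverting $x\mapsto x^n$ on the appropriate $K^{(k)}$ and then scaling turns the coordinate into one ranging over $R^{(k)}$, its valuation being an affine function of the original $\ord x_m$. According as $\sq_2$ is absent or present, this valuation runs over a full half-line (and the coordinate joins the free factor $\prod_{i=l'}^l R^{(k)}$) or over the finite interval $[\ord a_1,\ord a_2]$ (and it joins the bounded factor $\Delta_\xi$).

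I would then feed the base data produced by the cell --- the bounds $a_1,a_2$, the center and coefficients $c,d_j$, together with the original functions restricted to $D$ --- into the induction hypothesis applied to $D\subseteq K^{r+(m-1)}$. This rectilinearizes $\hat x$, yielding $\Delta_\xi$ and the triangular inequalities $0\leqslant\gamma_j\leqslant\ord(\beta_j(\xi))+\sum_{i<j}n_i\gamma_i$ for the first $l'-1$ coordinates; the nesting appears precisely because the bound $\ord a_2(\xi,\hat x)$ on the range of $x_m$'s valuation depends on $x_1,\dots,x_{m-1}$, whose rectilinearized valuations are the $\gamma_i$. Composing the one-variable maps for $x_m$ with the recursively obtained maps for $\hat x$, collecting all type-$t_c$ pieces into $T_c$ and all $f_0,f_1,f_2$ pieces into $g$, produces a family of isomorphisms of the required form. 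Condition (\ref{paramconditionf}) for $b_j\circ f$ then follows by substitution: $\tfrac1n\ord\big(x_m^{\mu_j}d_j\big)$ becomes the order of a monomial $x_m^{\mu_j'}$ in the new coordinate times a base monomial and a $\beta(\xi)$-factor supplied by the induction hypothesis, the divisibility needed to absorb the $\tfrac1n$ being exactly what the $P_n$-membership and Corollary \ref{corhensel} guarantee. Finally, when $l=m$ no $f_0$ map was used, so $f$ is a $C^1$ bijection between $m$-dimensional sets and $\Jac(f)$ is defined; writing $f=T_c\circ g$, the triangular translations $T_c$ have unit Jacobian, while $f_1$ and $f_2$ contribute diagonal, respectively triangular, derivative matrices whose determinants are monomials in the $x_i$ times $\beta(\xi)$-factors, so $\ord(\Jac f)$ is again of the monomial shape (\ref{paramconditionjac}).

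The main obstacle I anticipate is the uniformity in the parameter $\xi$: the number of admissible valuations of a bounded coordinate depends on $\xi$ through $\ord a_i(\xi,\hat x)$, so it cannot simply be enumerated. Controlling this via the parameter-dependent triangular system defining $\Gamma_A$, and verifying that this system propagates correctly through the induction --- in particular that the constants $l,l',k,n$ and the shape of $\Gamma_A$ depend only on the part $A$ and not on $\xi$ --- is the delicate point. By comparison, the normalization to the exact leading-digit pattern of $R^{(k)}$ and the choice of a common $k$ across coordinates, all built on Corollary \ref{corhensel}, are routine bookkeeping.
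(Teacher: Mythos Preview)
Your inductive skeleton---cell decomposition via Lemma~\ref{partition}, an outer translation $t_c$, handling $\lambda=0$ by an $f_0$, straightening $\lambda P_n$ by power maps and Corollary~\ref{corhensel}, and feeding $a_1,a_2,d_j$ into the induction on the base---matches the paper. The gap is in your treatment of the two-sided case. You write that when $\sq_2$ is present the new coordinate ``joins the bounded factor $\Delta_\xi$'', and that the triangular system for $\Gamma_A$ arises because the bound $\ord a_2(\xi,\hat x)$ depends on $x_1,\dots,x_{m-1}$, ``whose rectilinearized valuations are the $\gamma_i$''. But after the inductive step only the first $l'-1$ base coordinates lie in $\Delta_\xi$; the remaining ones, indexed $l',\dots,l$, lie in the \emph{free} factor $\prod R^{(k)}$ and have unbounded valuation. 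Once $\ord a_2$ is made monomial, the upper bound on the new coordinate reads
\[
\ord x_m \le \ord\!\Bigl(\beta(\xi)\prod_{i=1}^{l} x_i^{\nu_i}\Bigr),
\]
and if any exponent $\nu_i$ with $i\ge l'$ is positive this is \emph{not} a bound of the form $0\le\gamma_j\le\ord\beta_j(\xi)+\sum_{i<j}n_i\gamma_i$: it involves free, unbounded coordinates, so $x_m$ cannot simply be absorbed into $\Delta_\xi$.

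This is precisely the difficulty that Lemma~\ref{lemma:param-Eisom} is designed to resolve, and your proposal has no substitute for it. The paper first shifts away the lower bound by an $f_2$-map, reducing to the set~(\ref{eq paramdim>1}), and then invokes Lemma~\ref{lemma:param-Eisom}: one partitions according to whether $\ord x_m$ already satisfies the bound with $x_{m-1}$ omitted, swaps $x_{m-1}$ and $x_m$ on one piece, and on the other piece uses an $f_2$-map to decouple the last coordinate; an auxiliary power-map reduction handles $\nu_{m-1}>1$. Only through this recursive decomposition does one see which coordinates end up bounded and which free, and the triangular description of $\Gamma_A$ comes out of that argument (as the paper notes explicitly), not directly from the cell bounds. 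Without this lemma or an equivalent mechanism, your inductive step does not close.
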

\noindent We will need the following lemma: 
\begin{lemma} \label{lemma:param-Eisom}
Let $S\subseteq K^{r}$ be a semi-algebraic set, and $E = \cup_{\xi \in S} E_{\xi} \subseteq K^m$ be a family of sets, with $E_{\xi}$ of the form
\[E_{\xi}:=\left\{x\in \Delta_{\xi} \times \prod_{i=m'}^{m}R^{(k)} \mid \ord x_m\ \leq
 \ \ord\left(\beta(\xi)\prod_{i=1}^{m-1} x_i^{\nu_i}\right)\right\},\]
where $\beta(\xi)$ is a nonzero semi-algebraic function, $k \in \N_0$, $\nu_i \in \Z$, $\Delta_{\xi}$ is a bounded subset of $\prod_{i=1}^{m'} R^{(k)}$, $m, m' \in \N$ depending only on $E$.\\ There exists a finite partition  of  $S$ in semi-algebraic sets $S_i$ and for each $E_{S_i} = \cup_{\xi \in S_i} E_{\xi}$, a finite partition in parts $A$ such
that for each part $A = \cup_{\xi \in S_i} A_{\xi}$,  there is a family of isomorphisms $f_{\xi}$ of the form
 \[f_{\xi}: \Sigma_{\xi} \times \prod_{i=l'}^l R^{(k)} \to A_\xi: x \mapsto (g \circ F_0)(x).\] 
Here $\Sigma_{\xi}$ is  a bounded set, $l', l \leq  m$ are independent of $\xi$,  $g$ is a composition of functions of types $f_1, f_2$,
and $F_0$ is a composition of functions of type $f_0$.
\end{lemma}

\begin{proof}
Partition $S$ in $S_1 = \{\xi \in S \mid \beta(\xi) = 0\}$ and $S_2 = S \backslash S_1$. The Lemma is trivial for $E_{S_1}$, so we may assume that $\beta(\xi)$ is nonzero.

We work by induction on $m$. The case where $m'=m$ is trivial, so assume that $m > m'$.  
Note that for $m' \leqslant i <m$, there are no conditions on $x_i$ if
$\nu_i =0$. 

We first look at the case where $\nu_i < 0$ for all $i=m', \ldots, m-1$. In this case each $E_{\xi}$ is a bounded set. Indeed, we have that
\begin{align*} 0\leqslant \ord x_m&\leqslant \ord \beta(\xi) + \sum_{i=1}^{m'-1} \nu_i\ord x_i - \sum_{i=m'}^{m-1}|\vu_i| \ord x_i,\\
&\leqslant M(\xi) - \sum_{i=m'}^{m-1}|\vu_i| \ord x_i,
\end{align*}
where $M(\xi)$ is a natural number that may depend on $\xi.$
This implies that $E_{\xi}$ cannot contain any $x$ for which $\ord x_i > \frac{ \ord M(\xi)}{|\nu_i|}$ for some $m' \leqslant i <m$.

Hence, we may suppose that $\nu_{m-1}>0$.
 We first prove the proposition when $\nu_{m-1}=1$. We can partition $E_{\xi}$
into parts $E_{\xi,1}$ and $E_{\xi,2}$, with
\begin{eqnarray*}
 \hspace{12 mm}E_{\xi,1} & = & \left\{x\in
 E_{\xi}\mid  \ord x_m \leq \ord \left(\beta(\xi)\prod_{i=1}^{m-2}x_i^{\nu_i}\right)\right\},\\
 E_{\xi,2} & = & \left\{x\in
 E_{\xi}\mid  \ord \left(\beta(\xi)\prod_{i=1}^{m-2}x_i^{\nu_i}\right)< \ord x_m\right\}\\
 & = & \left\{x\in\Delta_{\xi} \times \prod_{i=m'}^m R^{( k)} \mid
 \ord\left(\beta(\xi)\prod_{i=1}^{m-2}x_i^{\nu_i}\right) <
 \ord x_m\right. \\&& \hspace{130pt}\left.\leq \ord\left(\beta(\xi) x_{m-1}\prod_{i=1}^{m-2}x_i^{\nu_i}\right)\right\}.
\end{eqnarray*}
Since $ \ord(\beta(\xi)\prod_{i=1}^{m-2}x_i^{\nu_i})\leq
 \ord(x_{m-1}\beta(\xi)\prod_{i=1}^{m-2}x_i^{\nu_i})$ for $x\in E_{\xi,1}$, it follows that we have an isomorphism \[E'_{\xi,1}\to E_{\xi,1}:(x_1,\ldots, x_{m-2}, x_{m-1} ,x_m) \mapsto(x_1,\ldots, x_{m-2},x_{m},x_{m-1})\] (which is a composition of maps of type $f_1$ and $f_{2}$) with $E'_{\xi,1}$ the set
 \[
 \left\{(x_1,\ldots,x_{m-1})\in \Delta_{\xi} \times \prod_{i=m'}^{m-1}R^{(k)} \mid
 \ord(x_{m-1})\leq
 \ord\hspace{-3pt}\left(\hspace{-2pt}\beta(\xi)\hspace{-2pt}\prod_{i=1}^{m-2}\hspace{-2pt}x_i^{\nu_i}\hspace{-2pt}\right)\right\}\times R^{(k)}\hspace{-2pt},
 \]
and the lemma follows for $E'_{\xi,1}$ by the induction hypothesis.\\
 For $E_{\xi,2}$, let $D_{\xi,m-1}$ be the set
\[\left\{(x_1,\ldots,x_{m-1})\in\Delta_{\xi}\times\prod_{i=m'}^{m-1}R^{(k)}\mid \ord
 \left(\beta(\xi)\prod_{i=1}^{m-2}x_i^{\nu_i}\right) < \ord x_{m-1}\right\}.
\]
We may suppose that $\beta(\xi)\in K^{( k)}$. Then, the map
\[
 D_{\xi,m-1}\times R^{(k)} \to E_{\xi,2}: x\mapsto\left(x_1, \ldots, x_{m-2}, \frac{x_{m-1}x_m}{
\beta(\xi)\prod_{i=1}^{m-2}x_i^{\nu_i}},x_{m-1}\right)
\]
is an isomorphism which is a composition of isomorphisms of type
$f_1$ and $f_2$. Also
$$
\Delta_{\xi} \times \prod_{i=m'}^m\hspace{-3pt} R^{( k)}\to  D_{\xi,m-1}\times R^{(k)}: x \mapsto\hspace{-2pt}
\left(\hspace{-2pt}x_1, \ldots, x_{m-2}, \pi \beta(\xi)\, x_{m-1}
\prod_{i=1}^{m-2}\hspace{-3pt}x_i^{\nu_i}, x_m \hspace{-2pt}\right)
$$
is an isomorphism which is a composition of isomorphisms of type
$f_1$ and $f_2$. This proves the lemma when $\nu_{m-1}=1$.
\par
Suppose now that $\nu_{m-1}>1$. We prove that we can reduce to the case
$\nu_{m-1}=1$ by partitioning and applying appropriate power maps.
Choose $\tilde k>\ord(\nu_{m-1})$ and put $\tilde k'=\tilde k+\ord(\nu_{m-1})$. We
may suppose that $\tilde k\geq k$, so we have a finite partition
$E_{\xi}=\bigcup_\alpha E_{\xi,\alpha}$, with
${\alpha=(\alpha_1,\ldots,\alpha_m)\in K^m}$, $\ord(\alpha_1)=0$,
$0\leq \ord(\alpha_i)<\nu_{m-1}$ for $i=2,\ldots,m$ and
\begin{align*}E_{\xi,\alpha}=\{x\in E_{\xi}\mid  x_{m-1}\in \alpha_{m-1}R^{(\tilde k)}, &\ x_i\in \alpha_i
P_{\nu_1}^{(\tilde k')} \\&\mbox{ for } i\in\{1,\ldots,m-2\}\cup \{m\}\}.\end{align*} By Corollary
\ref{corhensel} we have isomorphisms
\[f_{\xi,\alpha}:C_{\xi,\alpha}\to E_{\xi,\alpha}:x\mapsto(\alpha_1x_1^{\nu_{m-1}},\ldots , \alpha_{m-2}x_{m-2}^{\nu_{m-1}},\alpha_{m-1}x_{m-1},
\alpha_mx_m^{\nu_{m-1}}),\]
 with $C_{\xi,\alpha}=\{x\in\Sigma_{\xi} \times \prod_{i=m'}^m R^{(\tilde k)}\mid \ord(x_m)\leq \ord
(\beta' x_1\prod_{i=2}^{m-1}x_i^{\nu_i})\}$, which are isomorphisms
of type $f_1$.  Here $\Sigma_{\xi}$ is a bounded (definable) subset of $\prod_{i=1}^{m'-1} R^{(\tilde k)}$ and $\beta': S \to K^{\times}$ a semi-algebraic function. This reduces the problem to the case with $\nu_{m-1}=1$ and
thus the lemma is proved.
\end{proof}

\begin{proof}[Proof of Theorem \ref{thm:param-rect}.]

We give a proof by induction on $m$. We will show that we can reduce to the case described
in equation (\ref{eq paramdim>1}) below. The theorem then follows by
Lemma \ref{lemma:param-Eisom}. Using Lemma \ref{partition} and its
notation, we find a finite partition of $X$ such that each part $A$
has the form
\begin{eqnarray*}
A\hspace{-3pt} & = &\hspace{-3pt} \{(\xi,x,t)\in K^{r+(m+1)} \mid   \ord a_1(\xi,x)\sq_1 \ord(t-c(\xi,x))\sq_2 \ord a_2(\xi,x),\\
 & &\hspace{100pt} (\xi,x) \in D,\ t-c(\xi,x)\in \lambda P_n\},
\end{eqnarray*}
 and such that  $ \ord b_j(\xi,x,t)=\frac{1}{n}
\ord((t-c(\xi,x))^{\mu_{mj}}d_j(\xi,x))$ for each $x\in A$, with $\mu_{mj}\in\Z$.

First use a translation $t_c^{(m+1)}: A_{c=0} \to A: x\mapsto
(x_1,\ldots, x_m, t+c(\xi,x))$. If we apply the induction hypothesis to the set
$D\subset K^{r+m}$ and the functions $a_1,a_2,d_j$, we get a
partition of $D$ in parts $\tilde D$, such that for each $\tilde D$,
 there exists a family of definable functions $\{\tilde f_\xi\}_{\xi \in \pi_m(\tilde D)}$ so that we have isomorphisms of the form
  \[\tilde f_\xi:
\Delta_{\xi} \times \prod_{i=l'}^l R^{(k)}\to \tilde D_\xi: (x_1, \ldots, x_l) \mapsto (\tilde
T_c \circ \tilde g)(x_1, \ldots, x_l),\] with $\Delta_{\xi}$ a bounded definable set (of the form described in the formulation of the theorem) and $l,l'\in \N$ depending only on $\tilde D$.   $\tilde T_c$ is a
composition of functions of type $t_c$ and $\tilde g$ a composition
of functions of types $f_0,f_1,f_2$. This induces a finite
partition of $A_{c=0}$ in parts $\tilde A_{c=0}$, such that for each part
there is an isomorphism of the form
\[{\tilde f':B\to \tilde A_{c=0}: (\xi,x,t) \mapsto (\xi,\tilde f_\xi(x_1,\ldots,x_l), t ).}\]
For each $\xi \in \pi_m(B)$, $B_{\xi}$ is a set of the form $ B_{\xi}=\{ (x,t) \in  \Delta_{\xi} \times \prod_{i=l'}^lR^{(k)}\times \lambda P_n\mid \phi(\xi,x,t)\}$ with
\[\phi(\xi,x,t) \leftrightarrow 
   \ord\left(\alpha_1(\xi)\prod_{i=1}^lx_i^{\eta_i}\right)
\sq_1 \ \ord t\ \sq_2
\ord\left(\alpha_2(\xi)\prod_{i=1}^lx_i^{\varepsilon_i}\right),
\]
where the $\alpha_i: \pi_m(B)\to K$ are semi-algebraic functions.
If we compose the functions $\tilde f' $ with the translation $t_c^{(m+1)}$, we obtain isomorphisms of the form
\[f = t_c^{(m+1)} \circ \tilde f':B \to A_B: (\xi,x,t) \mapsto ((\tilde T_c \circ \tilde g)(\xi,x),
 t +c(\xi,x)) \]
between sets $B$ and sets $A_B \subset X$. The sets $A_B$ form a
finite partition of $X$. Applying the induction hypotheses, we find that there exist $\mu_{ij}\in \Z$ and  semi-algebraic functions $\delta_i: \pi_r(B)\to K$ such that $\ord (b_j \circ f)(\xi,x,t)=$ $ \frac1n\ord(\delta_i(\xi)t^{\mu_{l+1,j}}\prod_{k=1}^l x_i^{\mu_{kj}})$. 

We will show that we only need functions of type $f_0, f_1$ and $ f_2$  for our parametric rectilinearization of $B$.
 Thus the final isomorphisms $\prod R^{(k)} \to A$ will have the form $x \mapsto (T_c \, \circ \, g)(x)$, where $g$ is a composition of functions of types $f_0, f_1$ and $f_2$, and
 $T_c$ is a composition of functions of type $t_c$.
\\\\
If $\lambda=0$, then $B_{\xi} =\Delta_{\xi} \times \prod_{i=l'}^{l}R^{(k)}\times\{0\}$. In
this case our isomorphism has the form
$(\xi,x) \mapsto (T_c  \circ g \circ
f_0)(\xi,x).$ Recall that by Lemma \ref{partition}, $\alpha_1(\xi) \neq0
\neq\alpha_2(\xi)$. From now on suppose that  $\lambda \neq 0$.

As in the nonparametric case, we may suppose that $\sq_2$ is either
$\leq$ or no condition and $\sq_1$ is the symbol $\leq$ (possibly
after partitioning or applying
$(\xi,x,t)\mapsto(\xi,x,1/t)$).
\\

Choose $\bar k>\ord(n)$ and put $k'=\bar k+\ord(n)$. We may suppose that
$k'>k$, so we have a finite partition $B=\bigcup_\gamma B_\gamma$
with $\gamma=(\gamma_1,\ldots,\gamma_{l+1})\in K^{l+1}$, $0\leq
\ord(\gamma_i)<n$ and
\[B_\gamma=\{(\xi,x,t) \in B \mid t\in \gamma_{l+1}P_n^{(k')}, x_i\in\gamma_i P_{n}^{(k')},\text{ for } i=1,\ldots, l\}.\] Now we have isomorphisms
\[
f_\gamma:C_\gamma\to
B_\gamma:(\xi,x,t) \mapsto(\xi, \gamma_1x_1^{n},\ldots,\gamma_{l}x_{l}^{n}, \gamma_{l+1}t^n).
\]
 For each $\xi \in \pi_m(B_{\gamma})$, the set $C_{\gamma,\xi}$ is defined as $\{x\in \Delta_\xi' \times\prod_{i=l'}^{l}R^{(\bar k)}\times
K^{(\bar k)}\mid \psi(\xi,x,t)\}$, with
\[
\psi(\xi,x,t) \leftrightarrow  \ord\left(\alpha'_1(\xi)\prod_{i=1}^{l}x_i^{\eta_i}\right)
\leq \ord t\ \sq_2\
\ord\left(\alpha'_2(\xi)\prod_{i=1}^{l}x_i^{\varepsilon_i}\right),
\]
where the $\alpha_i': \pi_m(C_{\gamma}) \to K$ are semi-algebraic functions (for this we need Lemma \cite{denef-86}.2.4)  Put $\overline f = f
\circ f_{\gamma}$. Then the $b_{j}\circ\overline f$ satisfy
condition (\ref{paramconditionf}), since
\begin{eqnarray*}
\ord(b_j \circ f(f_\gamma(\xi,x,t))) &=& \frac1n \ord \left(\beta_j(\xi)t^{\mu_{l+1,j}n}\prod_{i=1}^{l+1} \gamma_i^{\mu_j}\prod_{i=1}^l x_i^{\mu_{ij}n}\right)\\
&=&\frac1n \ord \left(\beta_j(\xi) \prod_{i=1}^{l+1} \gamma_i^{\mu_j}\right)+ \ord\left(t^{\mu_{l+1,j}}\prod_{i=1}^l x_i^{\mu_{ij}}\right),\end{eqnarray*}
and by Lemma \cite{denef-86}.2.4 there exists a semi-algebraic function $\tilde{\beta}_j(\xi)$ such that $\ord\tilde{\beta}_j(\xi) = \frac1n \ord \left(\beta_j(\xi) \prod_{i=1}^{l+1} \gamma_i^{\mu_j}\right)$.

Put $\nu_i=\varepsilon_i-\eta_i$,
$\beta(\xi)=\alpha'_2(\xi)/\alpha'_1(\xi)$. Then the following is an isomorphism
\[
D_{\gamma} \to C_\gamma:
 (\xi,x,t)\mapsto\left(\xi, x_1,\ldots,x_{l},\alpha'_1(\xi)t\displaystyle\prod_{i=1}^{l}x_i^{\eta_i}\right),
\] with  $D_{\gamma,\xi} = \left\{x\in \Delta_{\xi} \times \displaystyle\prod_{i=1}^{l+1}R^{(\bar k)}\mid \ \ord t\sq_2
 \ord\left(\beta(\xi)\prod_{i=1}^{l} x_i^{\nu_i}\right)\right\}$.
\par
The case that $\sq_2$ is no condition is now trivial. Summarizing,
it follows that we can reduce to the case of an isomorphism $f:E\to X: (\xi, x,t) \mapsto (\xi, f_\xi(x,t))$, with $f_{\xi}$ a map $f_{\xi}:E_{\xi} \to X_\xi$, where $E_{\xi}$ is the set
\begin{equation}\label{eq paramdim>1}
E_{\xi} = \left\{(x,t) \in \Delta_{\xi} \times \prod_{i=l'}^{l}R^{(\bar k)}\mid \ord t \ \leq
 \ord \left(\beta(\xi)\prod_{i=1}^{l} x_i^{\nu_i}\right)\right\},
\end{equation}
with $\beta(x)$ a nonzero semi-algebraic function, $\bar k>0$, and $\nu_i\in\Z$, such that each
$b_j\circ f$  satisfies condition (\ref{paramconditionf}). 
\par
Use Lemma \ref{lemma:param-Eisom} to obtain a partition of $E$ in parts
$E_i =\cup_{\xi}E_{\xi,i}$ and families of isomorphisms $\phi_{\xi,i}: \Delta_{\xi} \times \prod R^{(k)} \to E_i$.  The
$\phi_{\xi,i}$ are composed of functions of types $f_0, f_1, f_2$ and
the components of $\phi_{\xi,i}$ all satisfy condition
(\ref{paramconditionf}). Therefore each $b_j \circ f \circ \phi_i$ will
satisfy condition $(\ref{paramconditionf})$. That the condition on the Jacobians holds can now be checked in a straightforward way (for a proof, see \cite{clu-lee-2008}). 
Our claim on the form of $\Delta_{\xi}$ follows immediately from the proof of Lemma \ref{lemma:param-Eisom}.
This finishes the proof of
Theorem \ref{thm:param-rect}.
\end{proof}

\begin{lemma} \label{lemma:param-Eisom}
Let $S\subseteq K^{r}$ be a semi-algebraic set, and $E = \cup_{\xi \in S} E_{\xi} \subseteq K^m$ be a family of sets, with $E_{\xi}$ of the form
\[E_{\xi}:=\left\{x\in \Delta_{\xi} \times \prod_{i=m'}^{m}R^{(k)} \mid \ord x_m\ \leq
 \ \ord\left(\beta(\xi)\prod_{i=1}^{m-1} x_i^{\nu_i}\right)\right\},\]
where $\beta(\xi)$ is a nonzero semi-algebraic function, $k \in \N_0$, $\nu_i \in \Z$, $\Delta_{\xi}$ is a bounded subset of $\prod_{i=1}^{m'} R^{(k)}$, $m, m' \in \N$ depending only on $E$.\\ There exists a finite partition  of  $S$ in semi-algebraic sets $S_i$ and for each $E_{S_i} = \cup_{\xi \in S_i} E_{\xi}$, a finite partition in parts $A$ such
that for each part $A = \cup_{\xi \in S_i} A_{\xi}$,  there is a family of isomorphisms $f_{\xi}$ of the form
 \[f_{\xi}: \Sigma_{\xi} \times \prod_{i=l'}^l R^{(k)} \to A_\xi: x \mapsto (g \circ F_0)(x).\] 
Here $\Sigma_{\xi}$ is  a bounded set, $l', l \leq  m$ are independent of $\xi$,  $g$ is a composition of functions of types $f_1, f_2$,
and $F_0$ is a composition of functions of type $f_0$.
\end{lemma}

\begin{proof}
Partition $S$ in $S_1 = \{\xi \in S \mid \beta(\xi) = 0\}$ and $S_2 = S \backslash S_1$. The Lemma is trivial for $E_{S_1}$, so we may assume that $\beta(\xi)$ is nonzero.

We work by induction on $m$. The case where $m'=m$ is trivial, so assume that $m > m'$.  
Note that for $m' \leqslant i <m$, there are no conditions on $x_i$ if
$\nu_i =0$. 

We first look at the case where $\nu_i < 0$ for all $i=m', \ldots, m-1$. In this case each $E_{\xi}$ is a bounded set. Indeed, we have that
\begin{align*} 0\leqslant \ord x_m&\leqslant \ord \beta(\xi) + \sum_{i=1}^{m'-1} \nu_i\ord x_i - \sum_{i=m'}^{m-1}|\vu_i| \ord x_i,\\
&\leqslant M(\xi) - \sum_{i=m'}^{m-1}|\vu_i| \ord x_i,
\end{align*}
where $M(\xi)$ is a natural number that may depend on $\xi.$
This implies that $E_{\xi}$ cannot contain any $x$ for which $\ord x_i > \frac{ \ord M(\xi)}{|\nu_i|}$ for some $m' \leqslant i <m$.

Hence, we may suppose that $\nu_{m-1}>0$.
 We first prove the proposition when $\nu_{m-1}=1$. We can partition $E_{\xi}$
into parts $E_{\xi,1}$ and $E_{\xi,2}$, with
\begin{eqnarray*}
 \hspace{12 mm}E_{\xi,1} & = & \left\{x\in
 E_{\xi}\mid  \ord x_m \leq \ord \left(\beta(\xi)\prod_{i=1}^{m-2}x_i^{\nu_i}\right)\right\},\\
 E_{\xi,2} & = & \left\{x\in
 E_{\xi}\mid  \ord \left(\beta(\xi)\prod_{i=1}^{m-2}x_i^{\nu_i}\right)< \ord x_m\right\}\\
 & = & \left\{x\in\Delta_{\xi} \times \prod_{i=m'}^m R^{( k)} \mid
 \ord\left(\beta(\xi)\prod_{i=1}^{m-2}x_i^{\nu_i}\right) <
 \ord x_m\right. \\&& \hspace{130pt}\left.\leq \ord\left(\beta(\xi) x_{m-1}\prod_{i=1}^{m-2}x_i^{\nu_i}\right)\right\}.
\end{eqnarray*}
Since $ \ord(\beta(\xi)\prod_{i=1}^{m-2}x_i^{\nu_i})\leq
 \ord(x_{m-1}\beta(\xi)\prod_{i=1}^{m-2}x_i^{\nu_i})$ for $x\in E_{\xi,1}$, it follows that we have an isomorphism \[E'_{\xi,1}\to E_{\xi,1}:(x_1,\ldots, x_{m-2}, x_{m-1} ,x_m) \mapsto(x_1,\ldots, x_{m-2},x_{m},x_{m-1})\] (which is a composition of maps of type $f_1$ and $f_{2}$) with $E'_{\xi,1}$ the set
 \[
 \left\{(x_1,\ldots,x_{m-1})\in \Delta_{\xi} \times \prod_{i=m'}^{m-1}R^{(k)} \mid
 \ord(x_{m-1})\leq
 \ord\hspace{-3pt}\left(\hspace{-2pt}\beta(\xi)\hspace{-2pt}\prod_{i=1}^{m-2}\hspace{-2pt}x_i^{\nu_i}\hspace{-2pt}\right)\right\}\times R^{(k)}\hspace{-2pt},
 \]
and the lemma follows for $E'_{\xi,1}$ by the induction hypothesis.\\
 For $E_{\xi,2}$, let $D_{\xi,m-1}$ be the set
\[\left\{(x_1,\ldots,x_{m-1})\in\Delta_{\xi}\times\prod_{i=m'}^{m-1}R^{(k)}\mid \ord
 \left(\beta(\xi)\prod_{i=1}^{m-2}x_i^{\nu_i}\right) < \ord x_{m-1}\right\}.
\]
We may suppose that $\beta(\xi)\in K^{( k)}$. Then, the map
\[
 D_{\xi,m-1}\times R^{(k)} \to E_{\xi,2}: x\mapsto\left(x_1, \ldots, x_{m-2}, \frac{x_{m-1}x_m}{
\beta(\xi)\prod_{i=1}^{m-2}x_i^{\nu_i}},x_{m-1}\right)
\]
is an isomorphism which is a composition of isomorphisms of type
$f_1$ and $f_2$. Also
$$
\Delta_{\xi} \times \prod_{i=m'}^m\hspace{-3pt} R^{( k)}\to  D_{\xi,m-1}\times R^{(k)}: x \mapsto\hspace{-2pt}
\left(\hspace{-2pt}x_1, \ldots, x_{m-2}, \pi \beta(\xi)\, x_{m-1}
\prod_{i=1}^{m-2}\hspace{-3pt}x_i^{\nu_i}, x_m \hspace{-2pt}\right)
$$
is an isomorphism which is a composition of isomorphisms of type
$f_1$ and $f_2$. This proves the lemma when $\nu_{m-1}=1$.
\par
Suppose now that $\nu_{m-1}>1$. We prove that we can reduce to the case
$\nu_{m-1}=1$ by partitioning and applying appropriate power maps.
Choose $\tilde k>\ord(\nu_{m-1})$ and put $\tilde k'=\tilde k+\ord(\nu_{m-1})$. We
may suppose that $\tilde k\geq k$, so we have a finite partition
$E_{\xi}=\bigcup_\alpha E_{\xi,\alpha}$, with
${\alpha=(\alpha_1,\ldots,\alpha_m)\in K^m}$, $\ord(\alpha_1)=0$,
$0\leq \ord(\alpha_i)<\nu_{m-1}$ for $i=2,\ldots,m$ and
\begin{align*}E_{\xi,\alpha}=\{x\in E_{\xi}\mid  x_{m-1}\in \alpha_{m-1}R^{(\tilde k)}, &\ x_i\in \alpha_i
P_{\nu_1}^{(\tilde k')} \\&\mbox{ for } i\in\{1,\ldots,m-2\}\cup \{m\}\}.\end{align*} By Corollary
\ref{corhensel} we have isomorphisms
\[f_{\xi,\alpha}:C_{\xi,\alpha}\to E_{\xi,\alpha}:x\mapsto(\alpha_1x_1^{\nu_{m-1}},\ldots , \alpha_{m-2}x_{m-2}^{\nu_{m-1}},\alpha_{m-1}x_{m-1},
\alpha_mx_m^{\nu_{m-1}}),\]
 with $C_{\xi,\alpha}=\{x\in\Sigma_{\xi} \times \prod_{i=m'}^m R^{(\tilde k)}\mid \ord(x_m)\leq \ord
(\beta' x_1\prod_{i=2}^{m-1}x_i^{\nu_i})\}$, which are isomorphisms
of type $f_1$.  Here $\Sigma_{\xi}$ is a bounded (definable) subset of $\prod_{i=1}^{m'-1} R^{(\tilde k)}$ and $\beta': S \to K^{\times}$ a semi-algebraic function. This reduces the problem to the case with $\nu_{m-1}=1$ and
thus the lemma is proved.

\end{proof}
\subsection{Application to $p$-adic integration}
Using parametric rectilinearization, we can give a new proof of the rationality of $p$-adic integrals with parameters. This was originally proven by  Denef, see \cite{denef-85, denef-2000}.
The proof uses the rationality result for p-adic integrals due to Denef, of which we provided an alternative proof in \cite{clu-lee-2008}.

\begin{theorem}[Rationality, \cite{denef-84}]\label{crat}
Let $S \subset K^m$ be a semi-algebraic set and $  f,\, g: S \to K$
semi-algebraic functions. If the following integral exists for $s
\in \R, s
>>0$ (that is, if the integrand is absolutely integrable for $s$ sufficiently big),  then
 \begin{equation}\label{int}
I(s) := \int_{S} |f(x)|^s\cdot|g(x)| 
|dx|
 \end{equation}
 is rational in $q_K^{-s}$ and the denominator of $I(s)$ is a
product of factors of the form $(1-q_K^{-sa-b})$ with $a,b \in
\Z$, and $(a,b) \neq (0,0)$.
\end{theorem}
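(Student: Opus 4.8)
The plan is to reduce the computation of $I(s)$ to a finite sum of elementary geometric series by means of the rectilinearization theorem. First I would apply Theorem~\ref{thm:param-rect} in the non-parametric case $r=0$ to the set $S$ and to the two integrand functions $f,g$ (playing the role of the $b_j$). This yields a finite partition $S=\bigcup_A A$ together with, for each part, an isomorphism
\[
\Phi_A:\ \Delta_A\times\prod_{i=l'}^{l}R^{(k)}\ \longrightarrow\ A,
\]
where $\Phi_A$ is the composition of maps of types $f_0,f_1,f_2$ and translations of type $t_c$ furnished by the theorem, $\Delta_A$ is a bounded set cut out by finitely many conditions $(\ord y_1,\dots,\ord y_{l'-1})\in\Gamma_A$, and $f\circ\Phi_A$, $g\circ\Phi_A$ satisfy condition~(\ref{paramconditionf}). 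Refining the partition, I may assume on each part that $f$ and $g$ either vanish identically (contributing nothing for $s>0$) or are nonvanishing with monomial order. Since $I(s)=\sum_A\int_A|f|^s|g|\,|dx|$, it suffices to treat one part at a time.

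Second, I would discard the parts of too small dimension. Semi-algebraic isomorphisms preserve dimension, so $\dim A=\dim\bigl(\Delta_A\times\prod_{i=l'}^{l}R^{(k)}\bigr)\le l$; if $l<m$ then $A$ has dimension $<m$, hence Haar measure zero in $K^m$, and contributes nothing to $I(s)$. On the remaining parts $l=m$, and here Theorem~\ref{thm:param-rect} guarantees in addition that the Jacobian of $\Phi_A$ satisfies condition~(\ref{paramconditionjac}). The $p$-adic change of variables formula then gives
\[
\int_A|f|^s|g|\,|dx|
=\int_{\Delta_A\times\prod R^{(k)}}|f\circ\Phi_A|^s\,|g\circ\Phi_A|\,|\Jac\Phi_A|\,|dy|.
\]

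Third, I would exploit that $\ord(f\circ\Phi_A)$, $\ord(g\circ\Phi_A)$ and $\ord(\Jac\Phi_A)$ are all of the monomial shape $\ord(\beta\prod_i y_i^{\mu_i})$. Writing $|z|=q_K^{-\ord z}$, the integrand collapses to a constant times a single monomial,
\[
|f\circ\Phi_A|^s\,|g\circ\Phi_A|\,|\Jac\Phi_A|
= c_A\cdot\prod_i|y_i|^{a_is+b_i},\qquad a_i,b_i\in\Z,
\]
so the integral factors over the coordinates. The bounded factor over $\Delta_A$ is a finite sum over $\gamma\in\Gamma_A$ and contributes only a Laurent polynomial in $q_K^{-s}$. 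Each unbounded factor is a geometric series,
\[
\int_{R^{(k)}}|y|^{a s+b}\,|dy|
=q_K^{-k}\sum_{j\ge0}q_K^{-j(as+b+1)}
=\frac{q_K^{-k}}{1-q_K^{-(as+b+1)}},
\]
which converges precisely because the integral was assumed absolutely convergent for $s\gg0$; this forces $as+b+1>0$, and in particular $(a,b+1)\ne(0,0)$, giving a denominator factor of the required form $1-q_K^{-sa'-b'}$ with $a',b'\in\Z$.

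Finally, summing the finitely many full-dimensional parts yields a rational function in $q_K^{-s}$ whose denominator divides the product of the factors $1-q_K^{-sa'-b'}$ produced above, as claimed. I expect the main obstacle to be the second step: justifying the change of variables, which hinges on having the Jacobian in monomial form (condition~(\ref{paramconditionjac})) exactly on the full-dimensional parts where $l=m$, and on checking that absolute integrability legitimizes both the interchange of the finite sum over parts with the integration and the rearrangement of each geometric series. Everything else is the bookkeeping of collecting the denominators into the stated shape.
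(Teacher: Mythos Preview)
The paper does not actually prove Theorem~\ref{crat}; it is quoted as a known result of Denef, and the surrounding text only remarks that an alternative proof via (non-parametric) rectilinearization was given in the companion paper~\cite{clu-lee-2008}. Your proposal reconstructs precisely that alternative argument: specialize Theorem~\ref{thm:param-rect} to $r=0$, discard the parts with $l<m$ as null sets, invoke the monomial Jacobian on the full-dimensional parts to change variables, and then evaluate the resulting integral as a product of a finite sum over the bounded factor $\Delta_A$ (a Laurent polynomial in $q_K^{-s}$, since $\Gamma_A$ is a fixed finite set when $r=0$) with geometric series over each free $R^{(k)}$-factor. Your handling of the denominators is also correct: absolute integrability for $s\gg0$ forces $as+b+1>0$ on each factor, ruling out $(a,b+1)=(0,0)$. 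So the proof is sound and matches the approach the paper points to, even though the paper itself does not carry it out here.
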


\begin{proposition}
Let $S \subseteq K^{r+m}$ be a semi-algebraic set and $f,g:S \to K$ a semi-algebraic function. For every $\xi \in \pi_r{S}$, let $I_{\xi}(s)$ be the following integral
\[I_{\xi}(s) := \int_{S_{\xi}} | f(\xi,x)|^s|g(\xi,x)| |dx|.\]
If $I_{\xi}(s)$ exists for $s >>0$, then the integral is rational in $q_{K}^{-s}$. More precisely,
\[ I_{\xi}(s) = |\beta(\xi)|^s|\gamma(\xi)| \frac{P_{\xi}(q_K^{-s})}{Q(q_K^{-s})},\]
where $\beta, \gamma$ are semi-algebraic functions $\pi_r(S) \to K$, and $Q(T), P_{\xi}(T) \in K[T]$.
 $Q(T)$ is a product of factors $(1-q_K^{-b}T^a)$ with $a,b \in \Z$. These factors do not depend on $\xi$. 
The degree of $P_{\xi}(T)$ is bounded by the order of a semi-algebraic function in the variables $\xi$.
\end{proposition}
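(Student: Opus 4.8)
The plan is to reduce $I_\xi(s)$, one part at a time, to an explicit product of geometric series by means of Theorem \ref{thm:param-rect}, and then to read off the asserted shape. First I would apply Theorem \ref{thm:param-rect} to the set $S$ together with the two functions $f$ and $g$ (so $n=2$). This produces a finite partition of $S$ into parts $A$, and for each part a family of isomorphisms $\phi_\xi : \Delta_\xi \times \prod_{i=l'}^{l} R^{(k)} \to A_\xi$, each a composition of basic maps of types $f_0,f_1,f_2,t_c$, such that both $f\circ\phi$ and $g\circ\phi$ satisfy condition (\ref{paramconditionf}), and such that on every part with $l=m$ the family $\phi$ also satisfies the Jacobian condition (\ref{paramconditionjac}). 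Since $I_\xi(s)=\sum_A \int_{A_\xi}|f(\xi,x)|^s|g(\xi,x)|\,|dx|$, it suffices to treat a single part.

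I would first discard the parts with $l<m$: there $A_\xi$ is the image of a set of dimension $l<m$ under a semi-algebraic bijection, hence has dimension $<m$ and Haar measure zero in $K^m$, contributing nothing to $I_\xi(s)$. On a part with $l=m$ I apply the $p$-adic change of variables formula along $\phi_\xi$, which is legitimate because the domain is open and $\phi_\xi$ is $C^1$. Since $f\circ\phi$, $g\circ\phi$ and $\Jac(\phi)$ each have the order of a monomial $\beta_{\bullet}(\xi)\prod_i x_i^{\mu_i}$, writing $T=q_K^{-s}$ the substituted integrand becomes $|\beta_f(\xi)|^{s}\,|\beta_g(\xi)\beta_J(\xi)|\,\prod_i|x_i|^{\,s\mu_i^{f}+\mu_i^{g}+\mu_i^{J}}$, to be integrated over $\Delta_\xi\times\prod_{i=l'}^{m}R^{(k)}$. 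Domain and integrand both factor over the coordinates, so after splitting off the constant $|\beta_f(\xi)|^{s}|\beta_g(\xi)\beta_J(\xi)|$ the two groups of variables can be handled separately.

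For the free coordinates $x_{l'},\dots,x_m$ the order runs over all of $\N_0$, and since $\{x\in R^{(k)}\mid\ord x=\gamma\}$ has measure $q_K^{-\gamma-k}$, each such coordinate contributes a geometric series $\sum_{\gamma\ge 0}q_K^{-k}(q_K^{-b_i}T^{a_i})^{\gamma}=q_K^{-k}/(1-q_K^{-b_i}T^{a_i})$, convergent for $s\gg 0$ by hypothesis, with $a_i,b_i\in\Z$ built from the integer exponents $\mu_i$ and therefore independent of $\xi$. (Equivalently, this is the rationality Theorem \ref{crat} applied to the parameter-free rectilinear set $\prod_{i=l'}^{m}R^{(k)}$.) Their product gives the denominator $Q(T)=\prod_i(1-q_K^{-b_i}T^{a_i})$, independent of $\xi$. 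For the bounded coordinates $x_1,\dots,x_{l'-1}$ the order tuple is confined to the finite set $\Gamma_A\subset\Gamma_K^{l'-1}$, so the corresponding factor is a finite sum of monomials in $T$ with rational coefficients, i.e. a Laurent polynomial in $T$. Gathering the factors over one part gives $|\beta_f(\xi)|^s|\beta_g(\xi)\beta_J(\xi)|\,P_{A,\xi}(T)/Q_A(T)$; summing the finitely many parts over the common denominator $Q(T)=\mathrm{lcm}_A\,Q_A(T)$, factoring out a single $|\beta(\xi)|^s|\gamma(\xi)|$, and absorbing the remaining powers of $T$ into the numerator yields the stated global expression. Here $\ord\beta(\xi)$ is chosen as the minimum of the finitely many $\Z$-linear $T$-exponents occurring (again the order of a semi-algebraic function, since a minimum of orders of semi-algebraic functions is such), which simultaneously clears the negative powers and makes $P_\xi$ a genuine element of $K[T]$.

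The rationality itself is then immediate, and I expect the true difficulty to lie in the ``more precise'' assertions, in particular the bound on $\deg P_\xi$. This is exactly where the explicit description of $\Delta_\xi$ in Theorem \ref{thm:param-rect} is indispensable: the tuples $\gamma=(\gamma_1,\dots,\gamma_{l'-1})\in\Gamma_A$ satisfy $0\le\gamma_j\le\ord\beta_j(\xi)+\sum_{i=1}^{j-1}n_i\gamma_i$, so by induction on $j$ each $\gamma_j$ is bounded by a fixed $\Z$-linear combination of the orders $\ord\beta_j(\xi)$. Hence the highest power of $T$ appearing in $P_\xi$ is bounded by such a combination, which is itself the order of a semi-algebraic function of $\xi$, namely the order of a suitable monomial in the $\beta_j(\xi)$; this is precisely the bound on $\deg P_\xi$ claimed. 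One must still verify that combining the per-part prefactors into a single $|\beta(\xi)|^s|\gamma(\xi)|$ does not spoil the polynomial character of $P_\xi$, but this is guaranteed because the order-differences that get absorbed are themselves bounded by orders of semi-algebraic functions.
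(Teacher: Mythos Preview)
Your proposal is correct and follows essentially the same route as the paper: apply Theorem~\ref{thm:param-rect} together with the change-of-variables formula, then split the resulting integral over $\Delta_\xi\times\prod_{i=l'}^{l}R^{(k)}$ into a parameter-free ``free'' factor (handled as a geometric series, or equivalently via Theorem~\ref{crat}) and the bounded factor $\Delta_\xi$, which yields a finite sum whose degree is controlled by the triangular inequalities on $\Gamma_A$. You are in fact more explicit than the paper about discarding the parts with $l<m$ and about recombining the per-part prefactors into a single $|\beta(\xi)|^{s}|\gamma(\xi)|$, steps the paper leaves implicit.
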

\begin{proof}
By Theorem \ref{thm:param-rect} and the change of variables formula for $p$-adic integrals,  $I_{\xi}(s)$ is equal to a finite linear combination (with constant coefficients) of integrals
\[|\beta(\xi)|^s |\gamma(\xi)|\int_{\Delta_{\xi}\times \prod_{i=l'}^{l}R^{(k)}} \left|\prod_{i=1}^lx_i^{\mu_i}\right|^s \left| \prod_{i=1}^l x_i^{\nu_i}\right| |dx|.\]
By Theorem \ref{crat}, this is equal to
\[R(q_K^{-s})\cdot I'_{\xi}(s):=R(q_K^{-s})\cdot|\beta(\xi)|^s| \gamma(\xi)|\int_{\Delta_{\xi}} \left|\prod_{i=1}^{l'-1}x_i^{\mu_i}\right|^s \left| \prod_{i=1}^{l'-1} x_i^{\nu_i}\right| |dx|,\]
where $R(T)\in K(T)$ and the denominator consists of factors  $(1-q_K^{-b}T^{a})$ with $a,b \in \Z$. We know by Theorem \ref{thm:param-rect} that $\Delta_{\xi}$ is a set of the form \[\left\{x \in \prod_{i=1}^{l'-1}R^{(k)}\mid (\ord x_1, \ldots, \ord x_{l'-1}) \in \Gamma_{S}\right\} \] with $\Gamma_{S}$ a finite subset of $\Gamma_{K}^{l'-1}$ containing elements $\alpha = (\alpha_1, \ldots, \alpha_{l'-1})$ that satisfy relations
\[0 \leqslant \alpha_j \leqslant \ord (\beta_j(\xi)) + \sum_{i=1}^{j-1}n_i\alpha_i\]
for non-zero semi-algebraic functions $\beta_i: \pi_r(S)\to K$ and $n_i,m_i \in \Z$ for $i =1,\ldots, l'-1$. 
Because of this, $I'_{\xi}(s)$ is equal to 
\[|\beta(\xi)|^s|\gamma(\xi)| q_K^{(l'-1)k}\sum_{\alpha \in \Gamma_S} q_K^{-\sum_{i=1}^{l'-1} (\mu_is +\nu_i-1)\alpha_i}. \]
As this is a finite sum (with the exact number of terms depending on $\xi$), this proves our claim.
\end{proof}

\subsection*{Acknowledgements}
I would like to thank Raf Cluckers for suggesting this topic to me.

\bibliographystyle{hplain}
\bibliography{/Users/iblueberry/Documents/Bibliografie}

\end{document}